\documentclass[a4paper, 12pt]{article}

\usepackage[utf8]{inputenc}
\usepackage[T1]{fontenc}
\usepackage{amsmath, amssymb}
\usepackage{amsthm}
\usepackage{graphicx}
\usepackage{tikz}
\usetikzlibrary{positioning}
\usepackage{hyperref}
\usepackage{geometry}
\title{A mathematical modelling of rheumatoid arthritis.}
\author{ P. Azerad, O. Iosifescu, A. Parmeggiani, O. Bortolotti,  G. Courties, F. Apparailly. }
\date{\today}
\usepackage{authblk}

\author[1]{P.~Azerad\thanks{Corresponding author.}}
\author[1]{O.~Iosifescu}
\author[2]{A.~Parmeggiani}
\author[3]{O.~Bortolotti}
\author[3,4]{F.~Apparailly}
\author[3]{G.~Courties}

\affil[1]{IMAG, UMR5149, Université de Montpellier, France}
\affil[2]{L2C, UMR5221, Université de Montpellier, France}
\affil[3]{IRMB, UMR1183, Université de Montpellier, INSERM, France}
\affil[4]{Clinical Department for osteoarticular diseases, Montpellier University Hospital Lapeyronie, France}

\newtheorem{rem}{Remark}[section]

\newtheorem{proposition}{Proposition}[section]
\newcommand{\keywords}[1]{%
  \par\noindent\textbf{Keywords: }#1
}
\newcommand{\msc}[1]{%
  \par\noindent\textbf{MSC Classification: }#1
}

\begin{document}

\maketitle 
\begin{abstract}
We developed a mathematical model to describe the inflammatory dynamics 
during arthritis development and resolution, focusing on the interaction between three key immune cell populations 
within the joint tissue, namely neutrophils (N),  sublining synovial tissue-resident macrophages (SL-TRM, T),
 and circulating monocytes that are recruited into the inflamed joint and differentiate into monocyte-derived macrophages (MoMac, M).
An inflammatory signal (S), representing the response to an external arthritogenic stimulus, 
drives the recruitment and activation of neutrophils and MoMac, while SL-TRM exert a regulatory and protective role.
We propose a minimal ordinary differential equation model of acute inflammatory response into a joint that couples an inflammatory signal S to the three cell populations (N, T, M) and incorporates recruitment, stimulation, nonlinear self-limitation and mutual regulation.
 We show that the equilibrium structure depends on three adimensional parameters A, B, C and derive a cubic equation characterizing nontrivial steady states.
We perform an analytical study of equilibria and their stability, and identify a bifurcation at A = 1 separating a disease-free regime from a sustained inflammatory state. 
Numerical simulations illustrate typical transient dynamics under various scenarios and show the ability of the model to capture the transition from acute to chronic inflammation, as well as the resolution of inflammation under certain conditions. 
In conclusion, our model provides a framework to explore mechanisms underlying acute versus sustained joint inflammation and offers a basis for parameter estimation from experimental data, as well as future extensions toward evaluation of therapeutic strategies.

\end{abstract}
\keywords{Arthritis;  Bifurcation Analysis; Inflammation; Persistence; Mathematical Modelling.}
\msc{92C50, 92D25, 34D20, 34C23, 37N25.}
\section{Introduction.}
Mathematical modelling of  auto-immune disease is relatively recent.
According to the systematic survey on mathematical modelling of auto-immune disease by  Ugolkov \textit{et al.}~\cite{frontiers2024},
the ﬁrst papers date back to the 1970s. Since
then, the number of relevant articles increased exponentially,
reaching more than 200 publications per year by 2024. 
However according to Macfarlane \textit{et al.}~\cite{macfarlane2020} which provides another careful and exhaustive bibliographic survey in 2020, the number of  mathematical models of Rheumatoid Arthritis (RA) is still relatively small.
An illuminative survey paper introducing the issues and pitfalls of mathematical modelling in immunology was from Andrew \textit{et al.}~\cite{andrew}. 
Let us cite Handel \textit{et al.}~\cite{Handel2020} for  examples and guidelines for sound mathematical modelling in immunology and also for the free software R-package DSAIRM~\cite{dsairm2019} which allows to easily simulate and analyze a large variety of mathematical models of immune response.
Among mathematical models that deal with aspects
of RA, we can mention the following ones.
In Seymour \textit{et al.}~\cite{seymour2001} a four differential equations model was introduced to describe the dynamics of the population of monocytes and the concentrations of three cytokines, namely IL-1, TNF-$\alpha$ and IL-10. 
In Jit \textit{et al.}~\cite{jit2005} RA versus systemic inflammatory response syndrom was modelled by a system of four differential equations, describing the evolution of population of certain cell compartments, namely TNF-$\alpha$ (free or bound), antibodies and antibody-antigen complexes.
The effect of TNF-$\alpha$ inhibitors like (s)TNFR2, Etanecerp, Infliximab was investigated.
Baker \textit{et al.}~\cite{baker2013} introduced a model with pro-inﬂammation 
(p) and anti-inﬂammation cells (a) and considered a system of two coupled differential equations for p and a. 
More complex models exhibiting various types of  bifurcations were introduced in~\cite{baker2017}. 
In Odisharia \textit{et al.}~\cite{jaiani2019} a five differential equations system was investigated to simulate the effect of tocilizumab on the populations of B and T-lymphocytes and cell population of cartilage.  
There exist also spatialized models, see Friedman \textit{et al.}~\cite{friedman} where partial differential equations were used to model a joint (synovial fluid and cartilage) and  Macfarlane \textit{et al.}~\cite{macfarlane2022}
 where an hybrid model was introduced to describe pannus formation in the arthritic joint.
Effects of drugs such as inﬂiximab, which blocks TNF-$\alpha$; methotrexate
which blocks cell growth, and tocilizumab
which blocks the receptor of IL-6 were investigated.
In Relouw \textit{et al.}~\cite{relouw} a fifteen equations model with 48 parameters was introduced to model sepsis.
In Zhou \textit{et al.}~\cite{zhou2018} a four differential equations model was used to describe the dynamics of the population of macrophages and fibroblasts  and their mutual growth factors.
A paper that particularly inspired us 
is from Ciuperca \textit{et al.}~\cite{torres2024} which focused on Alzheimer disease and presented a minimal compartment model with  five equations describing the concentrations of 
 A$\beta$-oligomers,
oligomers in the amyloid plaques,
A$\beta$-monomers,
microglial cells, and
interleukins.

Here we chose a compromise between simplicity and accuracy   to catch essential features of cell populations dynamics in the response to an acute inflammation signal. We chose to focus on cell populations and their interactions instead of the fine description of the many cytokines dynamics involved.
 Specifically, our model, presented in the next section, has four equations and only seven parameters and is an ordinary differential equations systems. We show that the behavior of our system depends on a single nondimensional parameter.
Despite its simplicity (or due to), we suggest that such a model provides a better understanding of the arthritis process and opens up interesting avenues for the design of future therapies.\par
The present paper is organized as follows.
In Section~\ref{sec:model}, we present the model. In Section~\ref{sec:analysis}, we analyse the mathematical properties of the differential equations system, compute its steady states and discuss their stability. In Section~\ref{sec:numerical_simulations}, we present some numerical simulations pertaining to different scenarii.
Section~\ref{sec:insights_predictions} discusses biological insights and testable predictions derived from the model.
The paper ends with  Section~\ref{sec:conclusion} that includes concluding remarks and  perspectives.

\section{Biological Model.}\label{sec:model}
In the experimental mouse model of arthritis used in this study, namely the serum transfer arthritis (STA) model, see~\cite{STA} and~\cite{weihaupt26},
 the acute inflammatory response is triggered by an external signal (the injection of an arthritogenic serum) 
 that activates the immune system and initiates an inflammatory response 
 within the joint tissue, which is composed of a lining (L) and sublining (SL) membrane.
  At steady state, the synovial lining membrane is populated by specialized tissue-resident macrophages (TRM) 
  that form a protective barrier and contribute to immune regulation (L-TRM). TRM are also present within the sublining membrane (SL-TRM),
   scattered and fewer in number. 
  Both types of TRM contribute to early immune sensing of environmental cues, exerting overall protective and regulatory roles by limiting excessive inflammation.
   However, during arthritis inflammatory flares, while L-TRM remain preserved, SL-TRM are progressively depleted. 
   This depletion is concomitant with the accumulation of macrophages derived from blood monocytes infiltrating the inflamed joints (MoMac). 
   The loss of the SL-TRM population weakens synovial immune regulation and favors persistent inflammatory cell infiltration through 
   to the local production of cytokines and chemokines. This activation promotes the rapid recruitment of neutrophils (N) from the bloodstream into the joint, 
   where they become the dominant cell population during the acute phase of inflammation.
    Neutrophils amplify the inflammatory response through the release of pro-inflammatory mediators. 
     Based on these biological observations, we modeled the joint inflammatory response as the interaction between an inflammatory signal and three cell populations—N, SL-TRM, and MoMac—capturing recruitment,
      amplification, and regulatory mechanisms in a minimal dynamical framework.

\subsection{Mathematical Modelling.}
The variable $S(t)$ is the inflammatory signal at time $t$. We normalize it by the standard dose of the arthritogenic serum, so that $S(t)$ is dimensionless and represents the relative strength of the inflammatory signal with respect to the standard dose.
We rescale all variables by their respective baseline values in the disease-free state, so that they are dimensionless and represent relative populations with respect to their baseline levels.
Namely variable $N(t)$ (resp. $M(t)$, $T(t)$) designates the relative population of neutrophils (resp. MoMac cells, SL-TRM cells) at time $t$ with respect to the baseline population $\widehat{N}$ (resp. $\widehat{M}$, $\widehat{T}$).\\
The model is given by the following system of ordinary differential equations:
\begin{subequations}\label{eq:system}
\renewcommand{\theequation}{\theparentequation.\arabic{equation}}
\begin{align}
\frac{dS}{dt} &= (a\, N - b\,T) S \label{eq:system:S}\\
\frac{dN}{dt} &= \alpha S N - \delta_N N (N-1) \label{eq:system:N}\\
\frac{dM}{dt} &= \beta S - \delta_M M (M-1) \label{eq:system:M}\\
\frac{dT}{dt} &= -\gamma S T + \delta_M M (1-T) \label{eq:system:T} 
\end{align}
\end{subequations}
where $a,b,\alpha,\beta,\gamma, \delta_N, \delta_M$ are real positive parameters.

Let us explain the meaning of each term in the system~\eqref{eq:system}.
The growth rate of the inflammatory signal $S(t)$ is enhanced by neutrophils $N$ (factor $a$) and inhibited by SL-TRM macrophages $T$ (factor $b$), which translates into the differential equation~\eqref{eq:system:S}.
Equation~\eqref{eq:system:N} describes the dynamics of the neutrophils $N$.
 Neutrophils $N$ are recruited by the inflammatory signal $S$ (factor $\alpha$) and they have a nonlinear logistic self-limitation (factor $\delta_N$). This is modeled by the term $\delta_N N (N-1)$ in the equation for $N$,
which is negative when N > 1 and positive when N < 1, hence in the absence of inflammation it tends to stabilize the
population of neutrophils around its baseline level. 
Equation~\eqref{eq:system:M} describes the dynamics of the MoMac cells $M$. They are recruited from the blood stream at a  rate $\beta S$ proportional to the inflammatory signal. They have a logistic self-regulation mechanism analogous to that of neutrophils.
 Note that the linear rate $\beta S$ of recruitment of MoMac cells is not proportional to the current population of MoMac cells, which models the fact that the recruitment of MoMac cells from the blood stream is not directly influenced by the current population of MoMac cells in the joint, but rather by the level of inflammation.

Equation~\eqref{eq:system:T} describes the dynamics of the SL-TRM cells $T$. The SL-TRM cells $T$ are destroyed by the inflammatory signal at a rate $\gamma S$. When $T$ is less than 1, the term $\delta_M M (1-T)$ is positive and contributes to the growth of $T$, while when $T$ is greater than 1, this term becomes negative and contributes to the decrease of $T$. This models the fact that the population of SL-TRM cells is regulated by the population of MoMac cells, which can promote their proliferation when they are depleted, but can also inhibit them when they are in excess.

In the system~\eqref{eq:system}, we see that inflammation further increases neutrophils (factor $\alpha$), increases MoMac (factor $\beta$), and decreases SL-TRM (factor $\gamma$).
\begin{rem}
The term $\delta_M M (1-T)$ in the equation for $T$ models  the stimulation of  SL-TRM cells by MoMac cells, which is supported by experimental evidence~\cite{STA},\cite{weihaupt26}.
Note that we could have introduced another parameter $\delta_T\neq \delta_M$ in this equation for $T$, but we choose to avoid it, since analysis and numerical simulations showed us that it did not change the dynamics of the crisis.
\end{rem}
To make the model more intuitive, we provide a schematic representation of the interactions between the inflammatory signal and the three cell populations in Figure~\ref{fig:graphe}.
\usetikzlibrary{decorations.markings}
\tikzset{
    mid arrow/.style={
        decoration={markings, mark=at position 0.5 with {\arrow{>}}},
        postaction={decorate}
    },
    mid bar/.style={
        decoration={markings, mark=at position 0.5 with {\arrow{|-}}},
        postaction={decorate}
    }
}
\begin{figure}[htb!]
\begin{center}
    \begin{tikzpicture}[node distance=4cm, auto, >=latex, thick]

        \node[draw, rectangle, fill=red!10, minimum width=3cm, minimum height=1cm] (N) {Neutrophils $N$};
        \node[draw, rectangle, fill=green!10, minimum width=3cm, minimum height=1cm, right=of N] (M) {MoMac $M$};
        \node[draw, rectangle, fill=magenta!10, minimum width=3cm, minimum height=1cm, right=of M] (T) {SL-TRM $T$};
        \node[draw, circle, fill=blue!10, minimum size=2cm, below=of M, yshift=-1cm] (S) {Immune signal $S$};

        \draw[mid arrow, line width=1.5pt] (N) to[bend left=20] node[left, xshift=-2mm, yshift=2mm] {foster $(a)$} (S);
        \draw[dashed,-|, line width=1.5pt] (T) to[bend left=20] node[right, xshift=2mm] {inhibition $(b)$} (S);
        \draw[mid arrow, line width=1.5pt] (S) to[bend left=20] node[left] {activation$(\alpha)$} (N);
        \draw[mid arrow, line width=1.5pt] (S) to[bend left=10] node[left,xshift=8mm, yshift=6mm] {activation $(\beta)$} (M);
        \draw[ dashed, line width=1.5pt, -|] (S) to[bend left=20] node[left, xshift=10mm, yshift=2mm] {inhibition $(\gamma)$} (T);
        \draw[ line width=1.5pt, loop above, looseness=8] (N) to node[above] {regulation $(\delta_N)$} (N);
        \draw[line width=1.5pt, loop above, looseness=8] (M) to node[above] {regulation $(\delta_M)$} (M);
        \draw[mid arrow, line width=1.5pt] (M.east) -- node[below, yshift=-2mm] {stimulation $(\delta_M)$} (T.west);

    \end{tikzpicture}
\end{center}
\caption{Schematic representation of the model}\label{fig:graphe}
\end{figure}

\bigskip


\section{Analysis of the model.}\label{sec:analysis}
\subsection{Existence and non-negativity of the solution.}
Existence and uniqueness of a local solution of the system~\eqref{eq:system} is guaranteed by the Cauchy-Lipschitz theorem since the right-hand side of the system is locally Lipschitz continuous.
\begin{proposition}\label{prop:positivity}
If the initial conditions $S(0)$, $N(0)$, $M(0)$, and $T(0)$ are nonnegative, then the solution $(S(t), N(t), M(t), T(t))$ of the system~\eqref{eq:system} remains nonnegative for all $t \geq 0$.   
\end{proposition}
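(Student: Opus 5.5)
The plan is to handle the four components in the order $S$, $N$, $M$, $T$, using the structure of each equation. The system is only known to have a local solution on a maximal interval $[0,t^*)$, so every statement below is made on that interval. Nonnegativity holds on whatever interval the solution exists; global existence would need separate bounds, which I would only remark on.

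For $S$ and $N$ the equations are linear in the unknown with a coefficient that is continuous along the trajectory. Equation~\eqref{eq:system:S} integrates to
\[
S(t)=S(0)\exp\Big(\int_0^t (aN(s)-bT(s))\,ds\Big),
\]
so $S(t)\ge 0$, and in fact $S\equiv 0$ if $S(0)=0$. Likewise, \eqref{eq:system:N} reads $N'=\big(\alpha S-\delta_N(N-1)\big)N$, which gives
\[
N(t)=N(0)\exp\Big(\int_0^t \big(\alpha S(s)-\delta_N(N(s)-1)\big)\,ds\Big)\ge 0 .
\]
These two formulas are exact, so no sign argument is needed.

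For $M$ and $T$ I would use the boundary-behaviour (quasi-positivity) argument. For $M$: at any time where $M=0$, we have $M'=\beta S\ge 0$, using the sign of $S$ already proved. For $T$: at any time where $T=0$, we have $T'=\delta_M M$, so nonnegativity of $T$ follows once $M\ge 0$ is known. Doing the components in this order avoids circularity.

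To make the boundary argument rigorous (a derivative equal to zero does not by itself exclude crossing), I would use an integrating factor. Write $T'+\big(\gamma S+\delta_M M\big)T=\delta_M M$. With $\Phi(t)=\exp\big(\int_0^t(\gamma S+\delta_M M)\,ds\big)$ this gives
\[
T(t)=\Phi(t)^{-1}\Big(T(0)+\int_0^t \Phi(s)\,\delta_M M(s)\,ds\Big)\ge 0,
\]
once $M\ge0$ is known. Similarly, $M'=-\delta_M(M-1)M+\beta S$ is of the form $M'=c(t)M+\beta S$ with $c=-\delta_M(M-1)$ continuous along the trajectory. Variation of constants then gives $M(t)=e^{\int_0^t c}\big(M(0)+\int_0^t e^{-\int_0^s c}\beta S(s)\,ds\big)\ge0$.

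The only delicate point, and the step I would flag as the main obstacle, is this rewriting of the nonlinear logistic terms as linear equations with trajectory-dependent coefficients. It is legitimate because the solution is fixed and continuous on $[0,t^*)$, so the coefficients are continuous functions of $t$. With the ordering $S\to N$, then $S\to M\to T$, the proof is complete on the maximal interval of existence.
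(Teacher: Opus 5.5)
Your proof is correct, but it takes a different route from the paper. The paper checks that the vector field is quasi-positive, meaning $f_i(y)\ge 0$ whenever $y_i=0$ and $y_j\ge 0$ for $j\neq i$. It then invokes a general invariance theorem for the nonnegative orthant, Proposition 2.1 of the cited work by Haraux.

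You instead use the specific structure of the system. Each equation has the form $y_i'=c_i(t)\,y_i+g_i(t)$, where $c_i$ is continuous along the fixed trajectory. The source $g_i$ is either zero (for $S$ and $N$) or nonnegative once a previously treated component is known to be nonnegative ($\beta S$ for $M$, $\delta_M M$ for $T$). So the ordering $S\to M\to T$, with $N$ handled independently, lets variation of constants give nonnegativity directly, with no appeal to an external theorem.

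Your route is self-contained and elementary, and it yields extra information at no cost. The face $\{S=0\}$ is invariant, and $S$ and $N$ stay strictly positive if they start positive. The paper's route is shorter and more robust. The quasi-positivity criterion needs no triangular ordering of the sources, so it would survive changes to the model that break your ordering.

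Your decision to work on the maximal interval $[0,t^*)$ is well placed. The paper's ``for all $t\ge 0$'' tacitly assumes global existence, which can fail. Take $k=\alpha/(a+\delta_N)$. On the line $N=kS$ one computes $(N-kS)'=kS(\delta_N+bT)\ge 0$, so the region $\{N\ge kS\}$ is forward invariant. In that region $S'\ge \bigl(akS-b\max(1,T(0))\bigr)S$. Hence data with $N(0)\ge kS(0)$ and $S(0)>b\max(1,T(0))/(ak)$ blow up in finite time.
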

\begin{proof}
We consider the vector field $F=(f_1, f_2, f_3, f_4)$ for $y=(y_1, y_2, y_3, y_4)\in \mathbb{R}^4$ given by:
\begin{equation*}
  \left\{
\begin{aligned}
f_1(y) &= (a y_2 - b y_4) y_1, \\
f_2(y) &= \alpha y_1 y_2 - \delta_N y_2 (y_2 - 1), \\
f_3(y) &= \beta y_1 - \delta_M y_3 (y_3 - 1), \\
f_4(y) &= -\gamma y_1 y_4 + \delta_M y_3 (1 - y_4).
\end{aligned}
\right.
\end{equation*}
We observe that the vector field $F$ is \emph{quasi-positive}, i.e.\ for each $i=1,2,3,4$, we have $f_i(y) \geq 0$ whenever $y_i = 0$ and $y_j \geq 0$ for all $j \neq i$. Thus from Proposition 2.1 in~\cite{haraux2016}, we deduce that if the initial conditions are nonnegative, then the solution remains nonnegative for all $t \geq 0$.
\end{proof}
This property is biologically meaningful since the variables $S$, $N$, $M$, and $T$ represent cell populations, which cannot be negative. Our model thus ensures that the solutions remain within the biologically relevant domain.       
\subsection{Boundedness of the solution.}
Considering the equation for $T$, one can easily prove that $T(t)$  is always bounded by its initial value $T(0)$ or by 1 if $T(0) \leq 1$, for all $t \geq 0$. 
\begin{rem}
From a biological point of view, the upper bound on T can be interpreted as a modelling assumption reflecting the niche-limited nature of sublining tissue-resident macrophages. Rather than implying that SL-TRM are strictly capped at their disease-free level in all biological situations, this assumption represents the idea that their expansion is constrained by local tissue space and homeostatic regulatory mechanisms.
\end{rem}

\begin{proposition}
Assume that the initial condition $T(0)$ is less than or equal to 1, then
the variable $T(t)$ remains less than or equal to 1 for all $t \geq 0$. 
If $T(0) > 1$, then $T(t)$ remains less than or equal to $T(0)$ for all $t \geq 0$.
\end{proposition}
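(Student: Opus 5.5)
We plan a comparison argument on the equation \eqref{eq:system:T} for $T$, using the nonnegativity of $S$ and $M$ given by Proposition~\ref{prop:positivity} (nonnegative initial data are assumed throughout, as in the biologically relevant setting). The key observation is that, on the maximal interval of existence, we may write
\begin{equation*}
\frac{dT}{dt} = -\gamma S T + \delta_M M (1-T) \le \delta_M M(1-T),
\end{equation*}
because $S(t)\ge 0$ and $T(t)\ge 0$. Setting $U(t)=T(t)-1$, this gives the linear differential inequality $U'(t)\le -\delta_M M(t)\,U(t)$ with a nonnegative continuous coefficient $\delta_M M(t)$.

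Next we integrate this inequality with the integrating factor $\exp\bigl(\int_0^t \delta_M M(s)\,ds\bigr)$. This yields
\begin{equation*}
\frac{d}{dt}\Bigl(U(t)\,e^{\int_0^t \delta_M M(s)\,ds}\Bigr)\le 0,
\end{equation*}
and hence
\begin{equation*}
T(t)-1\le (T(0)-1)\,e^{-\int_0^t \delta_M M(s)\,ds}.
\end{equation*}
If $T(0)\le 1$, the right-hand side is nonpositive, so $T(t)\le 1$. If $T(0)>1$, the exponential lies in $(0,1]$ since $M\ge 0$, so $T(t)-1\le T(0)-1$, that is $T(t)\le T(0)$. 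Both cases of the statement therefore follow from the same estimate. In fact, the second case gives the sharper conclusion that $T(t)-1$ decays as long as $M$ is nonzero, but this is not needed.

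The only delicate point is that Cauchy--Lipschitz provides only a local solution, so the bound holds on the maximal interval $[0,t^*)$. The statement ``for all $t\ge 0$'' should be read on that interval, or combined with a global existence result. We would also note that the bound on $T$ is itself one of the ingredients for excluding blow-up: it controls the damping term $-bTS$. The main thing to check carefully is that the integrating-factor argument requires only continuity of $M$ along the solution, which holds on $[0,t^*)$, so no further obstacle arises.
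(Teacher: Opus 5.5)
Your argument is correct and is essentially the paper's Gronwall argument. The only difference is that you discard $-\gamma S T\le 0$ outright (using $T\ge 0$) and read both cases off the single estimate $T(t)-1\le (T(0)-1)\,e^{-\int_0^t\delta_M M(s)\,ds}$, whereas the paper splits $-\gamma ST=-\gamma S(T-1)-\gamma S$, keeps $-\gamma S$ in the exponent, and handles $T(0)>1$ separately via $T-T(0)$.

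Your restriction to the maximal existence interval is well founded; the paper glosses over it, and finite-time blow-up really occurs. For $\mu<\alpha/(a+\delta_N)$ and $S_0$ large, the region $\{N\ge\mu S,\ S\ge S_0\}$ is forward invariant, and there $S'\ge cS^2$ with $c>0$. However, your side remark that the bound on $T$ helps exclude blow-up is backwards: an upper bound on $T$ only caps the damping $-bTS$, so it bounds $S'$ from below, not above.
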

\begin{proof}
    Let us consider the equation for $T$:
\begin{equation*}
\frac{dT}{dt} = -\gamma S T + \delta_M M (1 - T)
\end{equation*}
Let us recall Gronwall's lemma: if $u(t)$ is a function satisfying the differential inequality
\begin{equation*}
\frac{du}{dt} \leq \alpha(t) u(t)
\end{equation*}
for some function $\alpha(t)$, then we have
\begin{equation*}
u(t) \leq u(0) \exp{\int_0^t \alpha(s) ds}.
\end{equation*}
If $T(0) \leq 1$, we can write the equation for $T-1$ as
\begin{equation*}
\frac{d(T-1)}{dt} = -\gamma S (T-1) - \delta_M M (T-1) -\gamma S 
\leq (-\gamma S - \delta_M M) (T-1)
\end{equation*}
Thus, by Gronwall's lemma, we have
\begin{equation*}
T(t) - 1 \leq (T(0) - 1) \exp{\int_0^t (-\gamma S(s) - \delta_M M(s)) ds} \leq 0
\end{equation*}
which proves that $T(t) \leq 1$ for all $t \geq 0$.\\
In the other case, if $T(0) > 1$, we can write the equation for $T - T(0)$ as
\begin{equation*}
\frac{d(T - T(0))}{dt} = -\gamma S (T - T(0)) - \delta_M M (T - T(0)) + \delta_M M (1 - T(0)) - \gamma S T(0) \leq (-\gamma S - \delta_M M) (T - T(0))
\end{equation*}
Thus, by Gronwall's lemma, we have
\begin{equation*}
T(t) - T(0) \leq (T(0) - T(0)) \exp{\int_0^t (-\gamma S(s) - \delta_M M(s)) ds} = 0
\end{equation*}
which proves that $T(t) \leq T(0)$ for all $t \geq 0$.  
\end{proof}
About the variables $S$, $N$ and $M$, we did not  include degradation terms in the $S$, $N$ and  $M$ equations, which could ensure the boundedness of $S$, $N$ and $M$.
    Indeed we shall see in Section~\ref{sec:numerical_simulations} that in some cases the solution may not be bounded for all time, at least for $S$, $N$ and $M$.
     Actually we are interested in the  immune response to an external stimulus, which can lead to a very high level of inflammation and cell recruitment from the bloodstream or marrow. 
     The transient dynamics of the system can involve large variations in the levels of inflammation and cell populations, which may be shadowed by a model that includes degradation terms that dampens the growth of these variables.
    In our model, the solution may not be bounded for all time,
     but it can still be biologically relevant since we are interested in the early crisis dynamics 
     rather than the long-term behavior of the system.\\
However, we can still derive some lower bounds for the variables $M$ and $N$.    
Using Gronwall's lemma like in the previous proof, we can also bound from below the variables $M$ and  $N$.
\begin{proposition}
Assume that the initial condition $M(0)$ (resp. $N(0)$) is greater than or equal to 1, then
the variable $M(t)$ (resp. $N(t)$) is greater than or equal to 1 for all $t \geq 0$. If $M(0) < 1$ (resp. $N(0) < 1$), then $M(t)$ (resp. $N(t)$) is greater than or equal to $M(0)$ (resp. $N(0)$) for all $t \geq 0$.    
\end{proposition}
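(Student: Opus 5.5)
We mirror the Gronwall argument used for the upper bound on $T$, now applied to the equations \eqref{eq:system:M} and \eqref{eq:system:N}. By Proposition~\ref{prop:positivity} we may assume $S,N,M\ge 0$ on the whole interval of existence, and this sign information is the only input needed. We treat $M$ first, since its equation is simpler.

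\emph{Case $M(0)\ge 1$.} We rewrite \eqref{eq:system:M} for $u=1-M$:
\begin{equation*}
\frac{d(1-M)}{dt} = -\beta S + \delta_M M (M-1) = -\delta_M M\,(1-M) - \beta S \le -\delta_M M\,(1-M).
\end{equation*}
Here we use $\beta S\ge 0$. Gronwall's lemma with coefficient $-\delta_M M(t)$ then gives
\begin{equation*}
1-M(t)\le (1-M(0))\exp\Bigl(-\int_0^t\delta_M M(s)\,ds\Bigr)\le 0,
\end{equation*}
that is, $M(t)\ge 1$.

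\emph{Case $M(0)<1$.} We set $u=M(0)-M$ and write
\begin{equation*}
\frac{du}{dt}=-\beta S+\delta_M M(M-1)=-\delta_M M\,u-\beta S+\delta_M M\,(M(0)-1).
\end{equation*}
The last two terms are nonpositive because $S,M\ge0$ and $M(0)<1$. Hence $u'\le -\delta_M M u$, and Gronwall gives $u(t)\le u(0)e^{\cdots}=0$, i.e.\ $M(t)\ge M(0)$.

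For $N$ the same two computations apply with $\alpha S N$ in place of $\beta S$. For $N(0)\ge1$ we get
\begin{equation*}
\frac{d(1-N)}{dt}=-\alpha SN-\delta_N N(1-N)\le-\delta_N N(1-N),
\end{equation*}
using $\alpha S N\ge 0$, which holds by nonnegativity of both $S$ and $N$. For $N(0)<1$, with $u=N(0)-N$ we get
\begin{equation*}
u'=-\delta_N N u-\alpha SN+\delta_N N(N(0)-1)\le-\delta_N N u.
\end{equation*}
Gronwall then concludes exactly as for $M$.

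The only point needing care is that the Gronwall coefficient ($-\delta_M M$ or $-\delta_N N$) depends on the solution itself. It is nevertheless a continuous function of $t$ on the existence interval, so the version of Gronwall recalled earlier applies verbatim. We also check that every discarded term has the right sign; this is precisely where positivity from Proposition~\ref{prop:positivity} enters. No other obstacle is expected.
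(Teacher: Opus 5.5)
Your proof is correct and follows the route the paper indicates. The paper gives no separate proof and only says to use Gronwall's lemma as in the upper bound on $T$. That is what you do with $u=1-M$ (resp.\ $u=M(0)-M$) and the coefficient $-\delta_M M$ (resp.\ $-\delta_N N$), discarding terms whose sign is fixed by Proposition~\ref{prop:positivity} and noting the bounds hold on the whole existence interval.
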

\begin{rem}
The lower bounds on M and N should be interpreted in the context of normalized variables and systemic availability.
 They do not imply that monocyte-derived macrophages or neutrophils cannot decrease locally within the joint during resolution, but rather that, in this minimal model,
  their recruitment potential is maintained by circulating and bone marrow reservoirs.

\end{rem}

\subsection{Equilibria points.}
\emph{Equilibria points} of the system are found by setting the right-hand side of the system to zero, i.e.
 the equilibrium points $(S^*, N^*, M^*, T^*)$ satisfy the following system of algebraic equations:
\begin{align}
(a N^* - b T^*) S^* &= 0 \label{eq:equilibrium_S} \\
\alpha S^* N^* - \delta_N N^* (N^* - 1) &= 0 \label{eq:equilibrium_N} \\
\beta S^* - \delta_M M^* (M^* - 1) &= 0 \label{eq:equilibrium_M} \\
-\gamma S^* T^* + \delta_M M^* (1 - T^*) &= 0 \label{eq:equilibrium_T}
\end{align}
\subsubsection{Inflammation-free equilibrium points (S =0).}\label{sec:disease_free}
A first natural equilibrium point is given by $(S^* = 0, N^* = 1, M^* = 1, T^* = 1)$. This point corresponds to the disease-free state, where there is no inflammation ($S^* = 0$) and the populations of neutrophils, MoMac, and SL-TRM are at their baseline    levels ($N^* = 1, M^* = 1, T^* = 1$).

There are other equilibria points with $S^* = 0$  namely
$(S^* = 0, N^* = 0, M^* = 0, T^* = R)$, with arbitrary $R$;
 $(S^* = 0, N^* = 0, M^* = 1, T^* = 1)$;
 $(S^* = 0, N^* = 1, M^* = 0, T^* = R)$, with arbitrary $R$.
\begin{rem}\label{rem:vanishing}
These equilibria points with  $N^* = 0$ or $M^* = 0$ are not biologically relevant since they correspond to the absence of neutrophils or MoMac cells, which is not realistic in a living organism.
Furthermore, we will prove later that these equilibria points are unstable, so they will not be observed in physiological conditions.
\end{rem}

\subsubsection{Inflammatory equilibrium points (S > 0).}\label{sec:inflammatory_equilibria}
We express $N^*$ in terms of $T^*$ from equation~\eqref{eq:equilibrium_S}:
\begin{equation}
N^* = \frac{b}{a} T^*
\label{eq:N_in_terms_of_T}
\end{equation}
Substituting this expression for $N^*$ into equation~\eqref{eq:equilibrium_N}, we obtain:
\begin{equation}
S^* = \frac{\delta_N}{\alpha} \left(\frac{b}{a} T^* - 1\right)
\label{eq:S_in_terms_of_T}
\end{equation}
Since we are now looking for equilibria points with $S^* \neq 0$,~\eqref{eq:equilibrium_M} and~\eqref{eq:equilibrium_T} imply both $M^* \neq 1$ and $T^* \neq 1$.
From equation~\eqref{eq:equilibrium_M} and equation~\eqref{eq:equilibrium_T}, 
we derive  $\delta_M M^* = \frac{\beta S^*}{M^* - 1}$ and $\delta_M M^* = \frac{\gamma S^* T^*}{1 - T^*}$.
Equating these two expressions for $\delta_M M^*$, we obtain:
\[\frac{\beta S^*}{M^* - 1} = \frac{\gamma S^* T^*}{1 - T^*}
\]
Assuming $S^* \neq 0$, we can simplify this equation to:
\[\frac{\beta}{M^* - 1} = \frac{\gamma T^*}{1 - T^*}
\]
From this, we can express $M^*$ in terms of $T^*$:
\begin{equation}
M^* = 1 + \frac{\beta (1 - T^*)}{\gamma T^*} = 1 - \frac{\beta}{\gamma} + \frac{\beta}{\gamma T^*}
\label{eq:M_in_terms_of_T}
\end{equation}  

Substituting the expression for $M^*$  and $S^*$ into the last equation~\eqref{eq:equilibrium_T}, we obtain 
\begin{equation}
  -\gamma \frac{\delta_N}{\alpha} \left(\frac{b}{a} T^* - 1\right) T^* + \delta_M \left(1 - \frac{\beta}{\gamma} + \frac{\beta}{\gamma T^*}\right) (1 - T^*) = 0      
\end{equation}
Since $T^* \neq 0$, we can multiply this equation by $T^*$ and rearrange the terms to obtain a cubic equation for $T^*$:
\begin{equation}
\frac{\gamma}{\alpha} \frac{\delta_N}{\delta_M} \frac{b}{a} {(T^*)}^3 + \left( 1-   \frac{\gamma}{\alpha} \frac{\delta_N}{\delta_M}  - \frac{\beta}{\gamma}\right) {(T^*)}^2 + \left( 2\frac{\beta}{\gamma} -1 \right) T^* - \frac{\beta}{\gamma} = 0
\end{equation}
Rewriting this equation in the standard unitary form ${(T^*)}^3 + p {(T^*)}^2 + q T^* + r = 0$, we have:
\begin{equation}
{\left(T^*\right)}^3 + \frac{\alpha}{\gamma} \frac{\delta_M}{\delta_N} \frac{a}{b} \left(1- \frac{\beta}{\gamma} - \frac{\gamma}{\alpha} \frac{\delta_N}{\delta_M}\right) {(T^*)}^2 + \frac{\alpha}{\gamma} \frac{\delta_M}{\delta_N} \frac{a}{b} \left(2\frac{\beta}{\gamma} -1 \right) T^* - \frac{\alpha}{\gamma} \frac{\delta_M}{\delta_N} \frac{a}{b} \frac{\beta}{\gamma} = 0
\end{equation}
Notice that we need only 3 \emph{non dimensional} parameters $a/b$, $\beta/\gamma$, $\displaystyle{\frac{\gamma \delta_N}{\alpha \delta_M}}$ instead of the 7 original parameters.
In the following, we denote them as $A=a/b$, $B=\beta/\gamma$, and $C=\displaystyle{\frac{\gamma \,\delta_N}{\alpha\,\delta_M}}$.
\begin{rem}
Interestingly, the parameters $A$, $B$, and $C$ have the following biological interpretations:
\begin{itemize}
    \item $A = a/b$. This parameter represents the ratio of the inflammatory effect of the neutrophils with respect to the anti-inflammatory effect of the macrophages, see equation~\eqref{eq:system:S}.
    \item $B = \beta/\gamma$. This parameter represents the ratio of the increasing rate of the MoMac to the decreasing rate of the macrophage cell population, see equations~\eqref{eq:system:M}-\eqref{eq:system:T}.
    \item $C = \frac{\gamma \,\delta_N}{\alpha\,\delta_M} = \left[\frac{\gamma}{\delta_M} : \frac{\alpha}{\delta_N}\right]$. This parameter represents the ratio of the macrophages vs the neutrophiles growth/logistic control parameters in the disease progression, see equations~\eqref{eq:system:N}-\eqref{eq:system:T}.
\end{itemize}   
\end{rem}
Using these parameters, the cubic equation in $T^*$ becomes:
\begin{equation} 
{\left(T^*\right)}^3 +  \frac{A}{C}  \left(1- B- C \right) {(T^*)}^2 + \frac{A}{C} \left(2 B -1 \right) T^* - \frac{A B}{C} = 0
\label{eq:cubic_T}
\end{equation}
\par\noindent We now determine the number of positive roots of the cubic equation~\eqref{eq:cubic_T}.
From basic algebra, we can show that there are at most 3 such intersection points, hence at most 3 equilibria points with $S^* \neq 0$. 
We have to take into account the biological feasibility condition requiring the roots to be real and nonnegative, as stated in Proposition~\ref{prop:positivity}.
Since $\frac{AB}{C} > 0$,  there is at least one positive root. 
But we can be more precise using 
\emph{Descartes' rule of signs}. It states that the number of positive real roots of a polynomial (counting multiplicities) is equal to the number of sign changes between consecutive nonzero coefficients, or less than that by an even number. 
In our case, the coefficients of the cubic polynomial in~\eqref{eq:cubic_T} are:
\[1, \quad \frac{A}{C} \left(1- B- C \right), \quad \frac{A}{C} \left(2 B -1 \right), \quad - \frac{A B}{C}   \]  
So the number of sign changes depends on the signs of the second and third coefficients.
We have the following cases:
\begin{itemize}
\item If $ B +C > 1$ and $B > 1/2$, then there are 3 sign changes and the cubic has one or three positive roots, which can collapse according to their multiplicities.
\item If $B +C  < 1$ and $B > 1/2 $, then there is one sign change and the cubic has only one positive roots.
\item If $B +C  < 1$ and $B < 1/2$, then there is one sign changes and the cubic has only one positive root.
\item If $B +C  > 1$ and $B < 1/2$, then there is one sign change and the cubic has only one positive root.
\end{itemize}

\par\noindent We can use Cardano's method to find the roots of the cubic equation~\eqref{eq:cubic_T}. We rewrite it in the standard form:
\[ T^3 + p T^2 + q T + r = 0
\]
where
\[ p = \frac{A}{C} \left(1- B- C \right), \quad
q = \frac{A}{C} \left(2 B -1 \right), \quad
r = -\frac{A B}{C}
\]
We perform the change of variable $T = x - p/3$ to eliminate the quadratic term and obtain the depressed cubic equation:
\[
x^3 + \mathcal{A} x + \mathcal{B} = 0
\]
where
\[
\mathcal{A} = q - \frac{p^2}{3}, \quad \mathcal{B} = \frac{2 p^3}{27} - \frac{p q}{3} + r
\]
The discriminant of this equation is given by:
\[
\Delta_3 = {\left(\frac{\mathcal{B}}{2}\right)}^2 + {\left(\frac{\mathcal{A}}{3}\right)}^3
\]

The nature of the roots depends on the sign of the discriminant $\Delta_3$:
\begin{itemize}
\item If $\Delta_3 > 0$, there is one real root and two complex conjugate roots.
\item If $\Delta_3 = 0$, all roots are real and at least two are equal.
\item If $\Delta_3 < 0$, all roots are real and unequal.
\end{itemize}
Hence we can characterize the case when we have three distinct real roots by the condition $\Delta_3 < 0$, which is equivalent to:
\[ {\left(\frac{\mathcal{A}}{3}\right)}^3 < - {\left(\frac{\mathcal{B}}{2}\right)}^2.
\]
The parameters $\mathcal{A}$ and $\mathcal{B}$ can be rewritten in terms of the original parameters  using computer algebra as follows:
\[
\mathcal{A}=\frac{A}{ 3 C^2} \Big(-A {(B + C - 1)}^2 + 3 C(2B - 1)\Big)
\]
\[
\mathcal{B}= \frac{A} {27 C^3} \Big(-2A^2 {(B + C - 1)}^3 + 9 A C (2 B - 1)(B + C - 1) - 27 B C^2\Big)
\]
The discriminant $\Delta_3$ can be rewritten in terms of the original parameters as follows:
\[
\Delta_3 = \frac{A^2} {108 C^4} \Bigg(A^2 (4BC-1) {(B + C - 1)}^2 - 2AC (2B-1)(B^2-9BC-B-2) + 27 B^2 C^2\Bigg)
\]
\par\noindent

The real roots can be found using Cardano's formulas. 
If $\Delta_3 > 0$, the unique real root is given by:
\[
x^* = \sqrt[3]{-\frac{\mathcal{B}}{2} + \sqrt{\Delta_3}} + \sqrt[3]{-\frac{\mathcal{B}}{2} - \sqrt{\Delta_3}}.
\]
If $\Delta_3 < 0$, the three real roots are given by:
\[
x_k^* = 2 \sqrt{-\frac{\mathcal{A}}{3}} \cos\left(\frac{1}{3} \arccos\left(\frac{3 \mathcal{B}}{2 \mathcal{A}} \sqrt{-\frac{3}{\mathcal{A}}}\right) + \frac{2 k \pi}{3}\right), \quad k = 0, 1, 2.
\]
If $\Delta_3 = 0$, the real roots are given by:
\[x_1^* = 2 \sqrt[3]{-\frac{\mathcal{B}}{2}}, \quad x_2^* = x_3^* =  \sqrt[3]{\frac{\mathcal{B}}{2}}.
\]
\par\noindent
Here we plot a case with three distinct positive roots (see Figure~\ref{fig:three_roots}).
Namely we take $A = 1.1$, $B = 5$ and $C = \frac{2}{3}$ which gives $\Delta_3 \approx -0.49$.
\begin{figure}[htbp]
    \centering
    \includegraphics[width=0.6\textwidth]{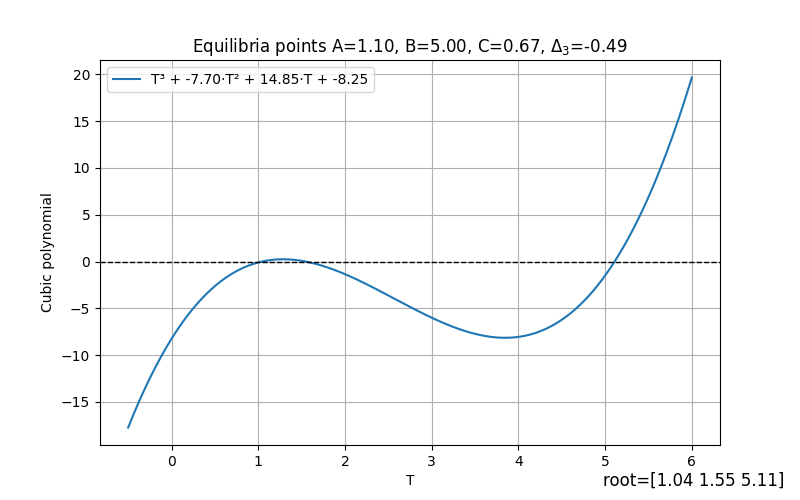}
    \caption{Cubic equation with three distinct real roots.}\label{fig:three_roots}
\end{figure}
\par\noindent 
Finally, we can recover the roots $T^*$ of the original cubic equation 
using the relation $T^* = x^* - p/3$.
\bigskip\\
We have to make sure that the nonegative roots $T^*$ found are biologically feasible, i.e.\ that they lead to nonnegative values of $S^*$, $N^*$, and $M^*$.
For instance in the case $A=1.1$, $B=5$, $C=2/3$ plotted in Figure~\ref{fig:three_roots}, we have three distinct positive roots of the cubic equation~\eqref{eq:cubic_T},
but none satisfying the biological feasibility conditions.\smallskip\\
From equations~\eqref{eq:N_in_terms_of_T},~\eqref{eq:S_in_terms_of_T}, and~\eqref{eq:M_in_terms_of_T}, we have: 
\[N^* = \frac{b}{a} T^*, \quad S^* = \frac{\delta_N}{\alpha} \left(\frac{b}{a} T^* - 1\right), \quad M^* = 1 - \frac{\beta}{\gamma} + \frac{\beta}{\gamma T^*}
\]
Thus, the conditions for biological feasibility are:
\begin{itemize}
\item $S^* \geq 0$ iff $T^* \geq \frac{a}{b} = A$  
\item $N^* \geq 0$  iff $T^* \geq 0$ (always true since we consider only positive roots)
\item $M^* \geq 0$ is always true if $B \leq 1$; otherwise if $B > 1$ it holds iff $T^* \leq  \frac{B}{B-1}$.
\end{itemize}
So the biological feasibility conditions can be summarized as follows:
\begin{align}
&A  \leq  T^*   &\text{if } B \leq 1, \label{feas_T1} \\
&A  \leq  T^*   \leq  \frac{B}{B-1}&  \text{if } B > 1. \label{feas_T2}
\end{align}
\begin{rem}
In the case $B>1$, if $A > \frac{B}{B-1}$, then     the feasibility condition~\eqref{feas_T2} cannot be satisfied and there is no feasible root. In the limit case $A = \frac{B}{B-1}$, the only feasible root is $T^* = A = \frac{B}{B-1}$
 which corresponds to $S^* = 0$, $N^* = 1$, $M^* = 0$ which has already been found in the inflammation-free section~\ref{sec:disease_free}. In the rest of this section, we will assume that $A < \frac{B}{B-1}$ if $B > 1$.
\end{rem}
Let us consider first the particular case $A = a/b = 1$. In this case, we have an obvious solution $T^* = 1$ which corresponds to the disease-free equilibrium point.
Therefore we can factor the cubic polynomial as follows:
    \begin{equation}
    (T^* - 1) \left( {(T^*)}^2 - \frac{1}{C} (B - 1) T^* + \frac{B}{C} \right) = 0.
    \label{eq:factored_cubic}
    \end{equation}
 If $B \leq 1$, all the coefficient of the quadratic polynomial are positive, so there is only one equilibrium, the disease-free $(S^*, N^*, M^*, T^*) = (0, 1, 1, 1)$. 
 Otherwise if $B>1$ the feasibility condition~\eqref{feas_T2} reads:
\begin{equation}
1 \leq T^* \leq \frac{B}{B-1}.
\end{equation}
 We can check easily that only the disease-free root $T^* = 1$ satisfies this condition, while the other two roots are outside this interval.
 Indeed if the discriminant of the quadratic factor $\Delta = \frac{{(B-1)}^2 - 4BC}{C^2}$ is non-negative, we have $C \leq \frac{{(B-1)}^2}{4B}$, 
 hence $\frac{B-1}{2C} \geq  \frac{2B}{B-1}$ thus the root $T_+^* = \frac{B-1}{2C} + \frac{\sqrt{\Delta}}{2} \geq \frac{2B}{B-1} > \frac{B}{B-1}  $ is not feasible.
 Consider the other root $T_-^* = \frac{B-1}{2C} - \frac{\sqrt{\Delta}}{2}$.
 Should this root be feasible, we would have $ T_-^* \leq \frac{B}{B-1}$. Since $T_+^* > \frac{B}{B-1}$, the quadratic factor would have to be negative or zero at $T^* = \frac{B}{B-1}$. But we have: ${\left(\frac{B}{B-1}\right)}^2 - \frac{1}{C} (B - 1) \frac{B}{B-1} + \frac{B}{C} = \frac{B^2}{{(B-1)}^2} > 0$ so the root $T_-^*$ is not feasible either.  
\bigskip\\
In the general case $A \neq 1$, the feasibility conditions~\eqref{feas_T1}-\eqref{feas_T2} have to be checked for each positive root found. 
For $A>1$, we will show that none of the positive roots satisfy the feasibility conditions (like in figure~\ref{fig:three_roots}) whereas for $A<1$, there is always exactly one feasible root (in addition to the disease-free equilibrium). 
To this aim, we consider four cases  depending on the signs of $A - 1$ and $B - 1$.
\begin{description}
\item[-] Case $A > 1$ and $B \geq 1$.\\ 
Let us define $f(T^*)$ as the cubic polynomial in the left-hand side of equation~\eqref{eq:cubic_T}. 
If $B>1$ we have
\begin{eqnarray*}
f(T^*)&=&{T^*}^3-\frac{A}{C}(B-1){T^*}^2-A{T^*}^2+\frac{A}{C}(B-1)T^*+\frac{A}{C}BT^*-\frac{AB}{C}\\\nonumber
&=&\frac{A}{C}(B-1)(T^*-\frac{B}{B-1})(1-T^*)+{T^*}^2(T^*-A)
\end{eqnarray*} 
So $f(T^*)>0$, for $T^* \in [A, \frac{B}{B-1}]$  if $B>1$.\\
If $B=1$ then $f(T^*)={T^*}^2(T^*-A)+\frac{A}{C}(T^*-1)>0$ for $T^*>A$.\\
Hence, thanks to~\eqref{feas_T1} and~\eqref{feas_T2}, there is no feasible root in this case.
\item[-] Case $A > 1$ and $B < 1$.
Since \begin{eqnarray*}
f(T^*)=\frac{A}{C}(B-1)(T^*-\frac{B}{B-1})(1-T^*)+{T^*}^2(T^*-A)&>&0
\end{eqnarray*}
for $T^* \geq A$ and $B<1$, there is no feasible root in this case either. 
\item[-] Case $A < 1$ and $B  \geq1$.\\
We compute $f(1)=1-A>0$ and $f(A)=(A-1)\frac{A}{C}(A(1-B)+B)<0$, so there is at least one feasible root in this case.\\
If $B>1$, computing the derivative $f'(T^*)$ we have:
\begin{eqnarray*}
f'(T^*) &=& \frac{A}{C}(B-1)\big[(\frac{B}{B-1}-T^*)+(1-T^*)\big]+2{T^*}(T^*-A)+{T^*}^2 >0
\end{eqnarray*}
for $T^* \in [A,1]$. So there is exactly one root in the interval $[A,1]$.
Since once again $f(T^*)=\frac{A}{C}(B-1)(T^*-\frac{B}{B-1})(1-T^*)+{T^*}^2(T^*-A)$ is positive for $T^* \in [1, \frac{B}{B-1}]$, there is exactly one feasible root in this case.\\
If $B=1$, we have $f(T^*)={T^*}^2(T^*-A)+\frac{A}{C}(T^*-1)>0$ for $T^*>1$ and $f'(T^*)={T^*}^2+2{T^*}({T^*}-A)+\frac{A}{C}>0$ for $T^* \in [A,1]$. 
So there is exactly one feasible root in this case too.
\item[-] Case $A < 1$ and $B < 1$.\\
Since $f(1)=1-A>0$ and $f(A)=(A-1)\frac{A}{C}(A(1-B)+B)<0$, there is at least one feasible root in this case too.
Let $\tilde{T}=T^*-A$ and $\tilde{f}(\tilde{T})=f(\tilde{T}+A)$. Then the equation $\tilde{f}(\tilde{T})=0$ is written as:
\begin{eqnarray*}
\tilde{T}^3 +\left(3A+\frac{A}{C}(1-B-C)\right)\tilde{T}^2&+&\left(3A^2+2\frac{A^2}{C}(1-B-C)+\frac{A}{C}(2B-1)\right)\tilde{T}\\
&+&\frac{A}{C}(A-1)\left(A(1-B)+B\right)\\
    =\tilde{T}^3+\left(2A+\frac{A}{C}(1-B)\right)\tilde{T}^2&+&\left(A^2+2\frac{A^2}{C}(1-B)+\frac{A}{C}(2B-1)\right)\tilde{T}\\
    &+&\frac{A}{C}(A-1)\left(A(1-B)+B\right)=0.
    \end{eqnarray*}
    Using \textsl{Descartes' rule of signs} we can see that there is exactly one positive root $\tilde{T}$ 
    of the cubic equation $\tilde{f}(\tilde{T})=0$  as the coefficients of $\tilde{T}^3$ and $\tilde{T}^2$ are positive and the constant term is negative. 
    So there is exactly one feasible root $T^*\geq A$ in this case too.
\end{description}
\par\noindent Let us summarize. The feasible equilibria points of the system can be classified as follows:
\begin{itemize}
\item Disease-free equilibrium: $(S^*, N^*, M^*, T^*) = (0, 1, 1, 1)$.
\item Other equilibria with $S^* = 0$: 
$S^* = 0, N^* = 0, M^* = 0, T^* = R$, with arbitrary $R$;
 $S^* = 0, N^* = 0, M^* = 1, T^* = 1$;
 $S^* = 0, N^* = 1, M^* = 0, T^* = R$, with arbitrary $R$.
\item Feasible equilibria with $S^* \neq 0$: 
None when $A \geq 1$. We shall see in the numerical simulations that in this case the system quickly diverges away from the disease-free equilibrium, no matter how weak is the initial inflammation. This suggests that the system is in a persistent inflammatory state with $S^* \neq 0$.
When $A < 1$, there is only one biologically feasible equilibria with $S^* \neq 0$. We shall see in the next section that this equilibrium is unstable, and the numerical simulations suggest that the system converges to the disease-free equilibrium, hence the inflammation is resolved.
\end{itemize} 
\subsection{Stability analysis.}
\emph{Linear stability analysis} of the equilibrium point $ (S^*, N^*, M^*, T^*)$ is performed by linearizing the differential equations system around this point.
The Jacobian matrix of the system is given by:
\[
J = \begin{pmatrix}
aN - bT & aS & 0 & -bS \\
\alpha N & \alpha S - \delta_N (2N-1) & 0 & 0 \\
\beta & 0 & -\delta_M (2M-1) & 0 \\
-\gamma T & 0 & \delta_M (1-T) & -\gamma S - \delta_M M
\end{pmatrix}
\]
According to the linear stability theory, the equilibrium point is (locally asymptotically) stable if all eigenvalues of the Jacobian matrix have negative real parts, and unstable if at least one eigenvalue has a positive real part. In what follows,
 we analyze the stability of the different equilibria points found in the previous section. By locally asymptotically stable, we mean that if the system is perturbed slightly from the equilibrium point, it will return to the equilibrium point as time goes to infinity. On the other hand, if the equilibrium point is unstable, a small perturbation will cause the system to move away from the equilibrium point.
\subsubsection{Stability of  inflammation free equilibria (S =0).}\label{sec:stab}
We analyze first the stability of the disease-free equilibrium point $(S^*, N^*, M^*, T^*) = (0, 1, 1, 1)$.
At this point, the Jacobian matrix simplifies to:
\[
J|_{(S^*, N^*, M^*, T^*) = (0, 1, 1, 1)} =
\begin{pmatrix}
a - b & 0 & 0 & 0 \\
\alpha & - \delta_N & 0 & 0 \\
\beta & 0 & -\delta_M & 0 \\
-\gamma & 0 & 0 & -\delta_M
\end{pmatrix}
\]
We notice immediately that the eigenvalues of this matrix are  $a-b$, $-\delta_N$ and $-\delta_M$.
 The last two are negative since $\delta_N, \delta_M > 0$. The remaining eigenvalue  is $a-b$. Thus the disease-free equilibrium is stable if $A = a/b < 1$ and unstable if $A =a/b > 1$. 
This means that if the inflammatory effect of Neutrophiles cells controlled by paramater $a$ is less than the curing effect of the  SL-TRM macrophages controlled by parameter $b$, the system will return to the disease-free state.
 Conversely, if $A=a/b > 1$, the system will move away from this equilibrium, potentially leading to a persistent inflammatory state.
The particular case $A=1$ corresponds to a bifurcation point where the stability of the disease-free equilibrium changes. In this case, a more detailed analysis is needed to determine the behavior of the system near this point, which  involve higher-order terms in the Taylor expansion of the system around the equilibrium.
The numerical simulations, see section~\ref{sec:numerical_simulations}, suggest that for $A=1$ the system quickly diverges from the disease-free equilibrium. 
Indeed in this case the disease-free equilibrium is unstable, as we prove below.
\par For $a=b$ the disease-free equilibrium is a non-hyperbolic equilibrium with eigenvalues $0$, $-\delta_N$, $-\delta_M$ and $-\delta_M$. 
The stability of this equilibrium can be analyzed using the center manifold theory, see~\cite{perko}. We prove here that the center manifold is repulsive for $S^* \geq 0$, hence the equilibrium is unstable.\\
In this case there is one center direction and three stable directions. We define new variables $x=S$, $y=N-1$, $z=M-1$, and $w=T-1$ to shift the equilibrium point to the origin. The system can be rewritten as:
\begin{eqnarray*}
\dot{x} &=&  ax (y - w) \\
\dot{y} &=& (y+1)(\alpha x - \delta_N y ) \\
\dot{z} &=& \beta x - \delta_M z (z+1)\\
\dot{w} &=& -\gamma x(w + 1) - \delta_M w (z+ 1)
\end{eqnarray*} 
The linear part of the system is given by:
\begin{eqnarray*}
\dot{x} &=& 0 \\
\dot{y} &=&\alpha x - \delta_N y \\
\dot{z} &=&\beta x - \delta_M z \\
\dot{w} &=&-\gamma x - \delta_M w
\end{eqnarray*}
where the center direction is given by the variable $x$ and the stable directions are given by the variables $y$, $z$, and $w$.
The center manifold can be expressed as a graph of the form $h(x) = (h_1(x), h_2(x), h_3(x))$ where $h_i(0) = 0$ for $i=1,2,3$.
We can compute the Taylor expansion of $h$ and find that the first nonzero term is given by $h_1(x) = \frac{\alpha}{\delta_N} x + o(x)$, $h_2(x) = \frac{\beta}{\delta_M} x + o(x)$, and $h_3(x) = -\frac{\gamma}{\delta_M} x + o(x)$.
The dynamics on the center manifold is given by: 
\begin{eqnarray*}
\dot{x} &=& ax^2\left(\frac{\alpha}{\delta_N}+ \frac{\gamma}{\delta_M}\right) + o(x^2)
\end{eqnarray*}
Since $\frac{\alpha}{\delta_N}+ \frac{\gamma}{\delta_M} > 0$, the center manifold is repulsive for $x \geq 0$, hence the equilibrium is unstable.

The stability of other equilibria points can be analyzed similarly by evaluating the Jacobian matrix at those points and inspecting the eigenvalues.
\begin{itemize}
\item For $(S^* = 0, N^* = 0, M^* = 0, T^* = R)$, with arbitrary $R$, the Jacobian matrix becomes:
\[
J|_{(S^*, N^*, M^*, T^*) = (0, 0, 0, R)} =
\begin{pmatrix}
- b R & 0 & 0 & 0 \\
0 & \delta_N & 0 & 0 \\
\beta & 0 & \delta_M & 0 \\
-\gamma R & 0 & \delta_M (1 - R) & 0
\end{pmatrix}
\]
The eigenvalues of this matrix are $-b R$, $\delta_N$, $\delta_M$, and $0$. Since $\delta_N$ and $\delta_M$ are positive, this equilibrium point is unstable. Even if the immune signal will decrease exponentially fast, the populations of neutrophiles and MoMac will grow out of bound. This unstable equilibrium where the population of neutrophiles and MoMac is zero is  not physiologically relevant and will not be observed in practice, see Remark~\ref{rem:vanishing}.
\item For $(S^* = 0, N^* = 0, M^* = 1, T^* = 1)$, the Jacobian matrix is:
\[
J|_{(S^*, N^*, M^*, T^*) = (0, 0, 1, 1)} =
\begin{pmatrix}
-b & 0 & 0 & 0 \\
0 & \delta_N & 0 & 0 \\
\beta & 0 & -\delta_M & 0 \\
-\gamma & 0 & 0 & -\delta_M
\end{pmatrix}
\]
The eigenvalues of this matrix are $-b$, $\delta_N$, $-\delta_M$, and $-\gamma$. Since there is a positive eigenvalue, this equilibrium point is unstable. Neutrophiles can grow out of bound. This equilibrium where the population of neutrophiles is zero will not be observed in practice either, see Remark~\ref{rem:vanishing}.
\item For $(S^* = 0, N^* = 1, M^* = 0, T^* = R)$, with arbitrary $R$, the Jacobian matrix is
\[
J|_{(S^*, N^*, M^*, T^*) = (0, 1, 0, R)} =
\begin{pmatrix}
a - b R & 0 & 0 & 0 \\
\alpha & -\delta_N & 0 & 0 \\
\beta & 0 & \delta_M & 0 \\
-\gamma R & 0 & \delta_M (1 - R) & 0
\end{pmatrix}
\]
There is a positive eigenvalue $\delta_M$, hence this equilibrium point is unstable. MoMac can grow out of bound. This equilibrium  where the population of MoMac is zero will not be observed in practice either, see Remark~\ref{rem:vanishing}.
\end{itemize}
\subsubsection{Stability of inflammatory equilibrium points (S> 0).}
The stability of the equilibria with $S^* \neq 0$ can be analyzed similarly by evaluating the Jacobian matrix at those points and determining the eigenvalues.
Taking into account the equilibrium equations~\eqref{eq:equilibrium_S},~\eqref{eq:equilibrium_N}, and~\eqref{eq:equilibrium_T}, the Jacobian matrix at $(S^*, N^*, M^*, T^*)$ is:

\[
J = \begin{pmatrix}
0 & aS & 0 & -bS \\
\alpha N &  - \delta_N (N) & 0 & 0 \\
\beta & 0 & -\delta_M (2M-1) & 0 \\
-\gamma T & 0 & \delta_M (1-T) & - \delta_M\frac{M}{T} 
\end{pmatrix}
\]
The eigenvalues of this matrix can be found by solving the characteristic polynomial $\det(J - \lambda I) = 0$. A straightforward computation shows that the characteristic polynomial is given by:
\begin{eqnarray*}
&P(\lambda) = \lambda^4 + \Big(\delta_N N + \delta_M (2M-1) + \delta_M \frac{M}{T}\Big) \lambda^3 \\
&+ \Big(\delta_N N \delta_M(2M-1 + \frac{M}{T}) + {\delta_M}^2 (2M-1)\frac{M}{T} -a S \alpha N - b S \beta \gamma \Big) \lambda^2 \\
&+ \Big(- a S \alpha N \delta_M (2M-1+\frac{M}{T})  - b S\gamma T\delta_N N + \delta_N N {\delta_M}^2(2M-1)\frac{M}{T}- b{\delta_M}^2 M^2(1-T)\Big) \lambda \\
&- a S \alpha N {\delta_M}^2 (2M-1) \frac{M}{T} - b \delta_N N {\delta_M}^2 M^2 (1-T)
\end{eqnarray*}
Due to the Descartes' rule of signs, we claim that the polynomial $P(\lambda)$ has one positive root (hence the equilibrium is unstable). Indeed, the coefficients of $P(\lambda)$ are positive for the terms in 
$\lambda^4$ and $\lambda^3$, whereas the constant term is negative. To prove that there is only one change of sign and hence one positive root, we have to check that if the coefficient of $\lambda$ is positive then the coefficient of 
$\lambda^2$ is positive too. 

\noindent
If the coefficient of $\lambda$ is positive we have the following inequality:
\[
 a S \alpha N \delta_M (2M-1+\frac{M}{T})+ b{\delta_M}^2 M^2(1-T)+ b S\gamma T\delta_N N < \delta_N N {\delta_M}^2(2M-1)\frac{M}{T}
\]
Thanks to the positivity of all terms, we can deduce the following inequalities:
\begin{equation}
 a S \alpha N \delta_M (2M-1+\frac{M}{T}) < \delta_N N {\delta_M}^2(2M-1)\frac{M}{T},
\label{ineq1}
\end{equation}
and
\begin{equation}
 b S\gamma T < {\delta_M}^2(2M-1)\frac{M}{T}.
\label{ineq2}
\end{equation}
Inequality~\eqref{ineq1} implies that 
\begin{equation}\label{ineq3}
a S \alpha N < a S \alpha N\left(\frac{{\delta_M}^2 {(2M-1+\frac{M}{T}) }^2}{{\delta_M}^2(2M-1)\frac{M}{T}}\right)<\delta_N N \delta_M (2M-1+\frac{M}{T}).
\end{equation}
Due to~\eqref{ineq2} and~\eqref{ineq3}, the coefficient of $\lambda^2$ is positive.
\par\noindent
This proves that biologically feasible equilibrium with $S^* \neq 0$, which only exists if $A<1$ see section~\ref{sec:inflammatory_equilibria},  is unstable.
For $A\geq 1$ the only biologically feasible equilibrium is the disease-free equilibrium which is unstable, as we have seen in section~\ref{sec:stab}.

\subsection{Bifurcation diagram.}
We perform here a bifurcation analysis with respect to the \emph{non dimensional} parameters $A=a/b$. Naturally, we restrict our analysis to the biologically relevant feasible equilibria.
From the stability analysis above, we see that we  have only one  bifurcation at  $A=1$: for $A<1$ there are two biologically feasible equilibria, one  inflammatory one with $S^* \neq 0$ which is unstable and the disease-free equilibrium which  is stable; 
for $A\geq 1$ the disease-free equilibrium is unstable and there is no other biologically feasible equilibrium.
The number and stability of equilibria change as the parameter $A$ crosses the critical value $A=1$, indicating  a bifurcation point in the system's dynamics.   
We can plot the bifurcation diagram in the $(A,S^*)$ plane (see Figure~\ref{fig:bifurcation}).
\begin{figure}
 \includegraphics[scale=0.5]{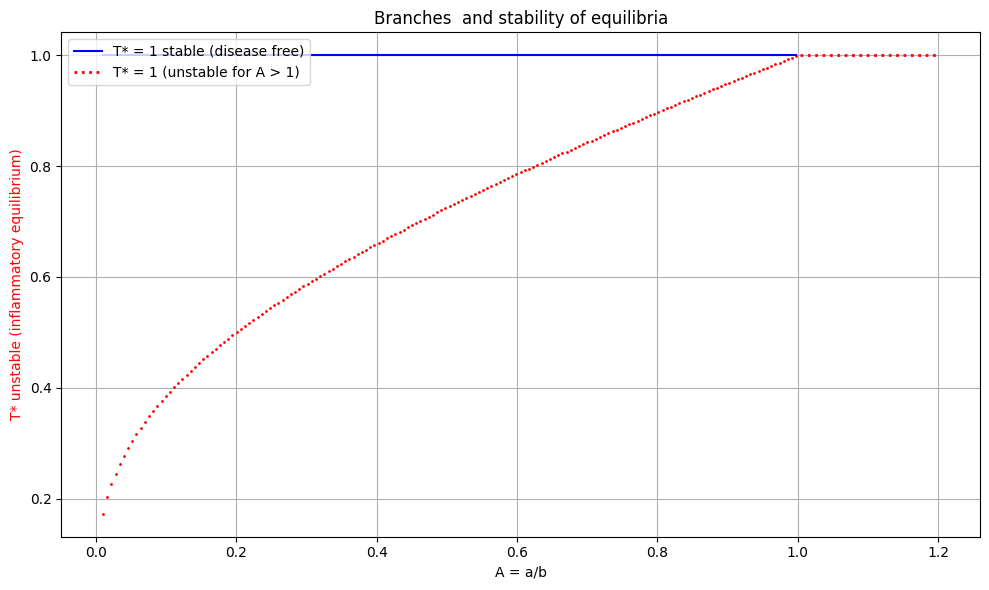}      
 \caption{Bifurcation diagram in the $(A,T^*)$ plane.}\label{fig:bifurcation}
\end{figure}
\section{Numerical simulations.}\label{sec:numerical_simulations}
In this section we present some numerical simulations of the model to illustrate the different dynamic regimes and the effect of the parameter $A=a/b$, the initial immune signal $S(0)$, as well as the initial value of SL-TRM $T(0)$ on the healing prognosis.
Let us emphasize that these are qualitative scenarios illustrating dynamic regimes, and not a quantitative fit to experimental kinetics.

Below we reproduce some typical simulations of the model using the following parameters:
$\alpha=1,\, \beta=1,\, \gamma=1,\,
       \delta_N=1,\, \delta_M=1,\, a=0.35,\; b=1$,  
       duration=20 days and initial conditions $S(0)=1,\, N(0)=1,\, M(0)=1,\, T(0)=1$.
        
\begin{figure}[htbp]
    \centering
    \includegraphics[width=.7\textwidth]{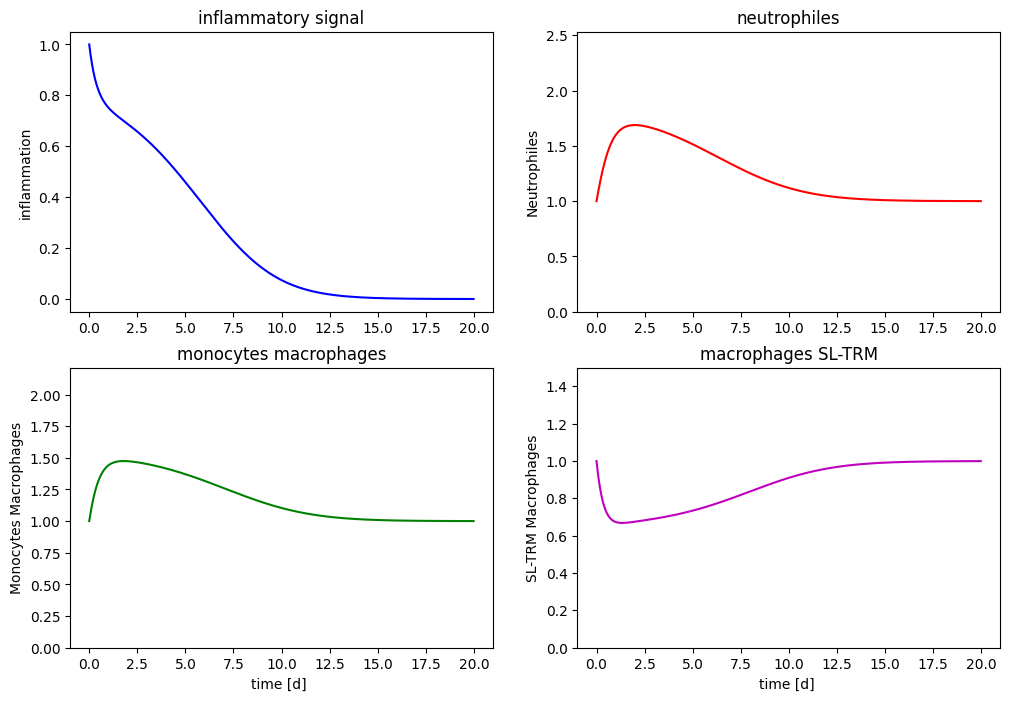}
    \caption{$A=0.35$}\label{035}
\end{figure}
First let us take $a=0.38$, keeping the other parameters unchanged.
\begin{figure}[htbp]
    \centering
    \includegraphics[width=.7\textwidth]{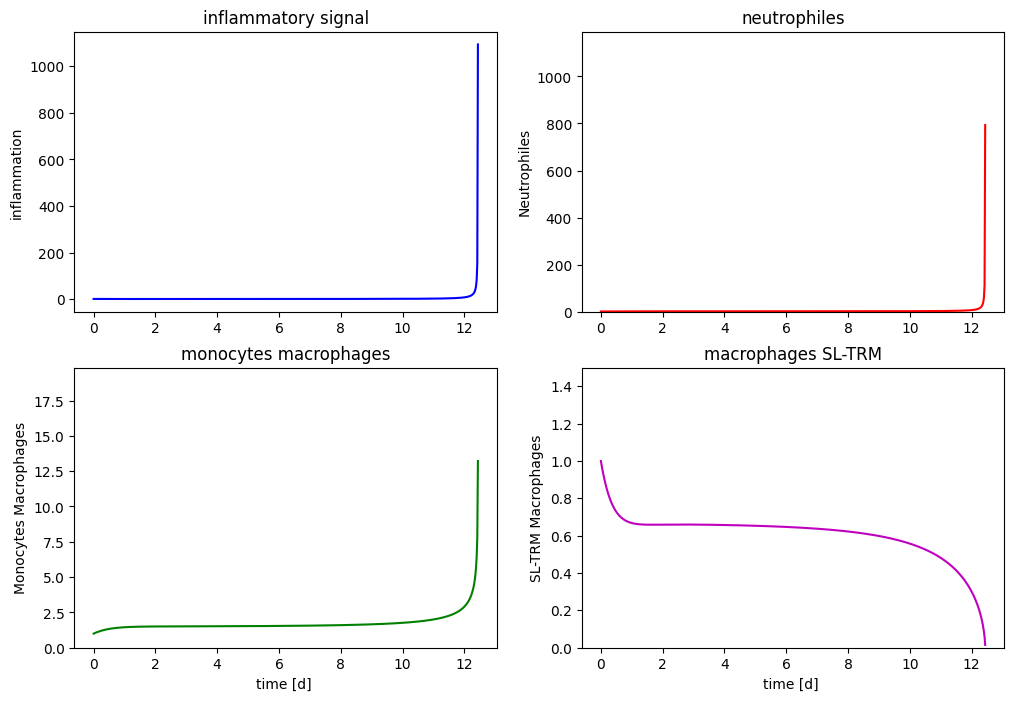}
    \caption{$A=0.38$}\label{038}
\end{figure}
We see clearly  in Fig.~\ref{035} that for $A=0.35 < 1$ the system converges to the disease-free equilibrium, whereas for $A=0.38$ the system diverges away from this equilibrium, indicating a persistent inflammatory state, see Fig.~\ref{038}. 
We can also decrease the initial immune signal $S(0)$ to see if the system converges to the disease-free equilibrium or diverges away from it.
Halving the initial immune signal i.e.~taking $S(0)=0.5$ and $A=0.38$, we obtain the following result:
\begin{figure}[htbp]
    \centering
    \includegraphics[width=.7\textwidth]{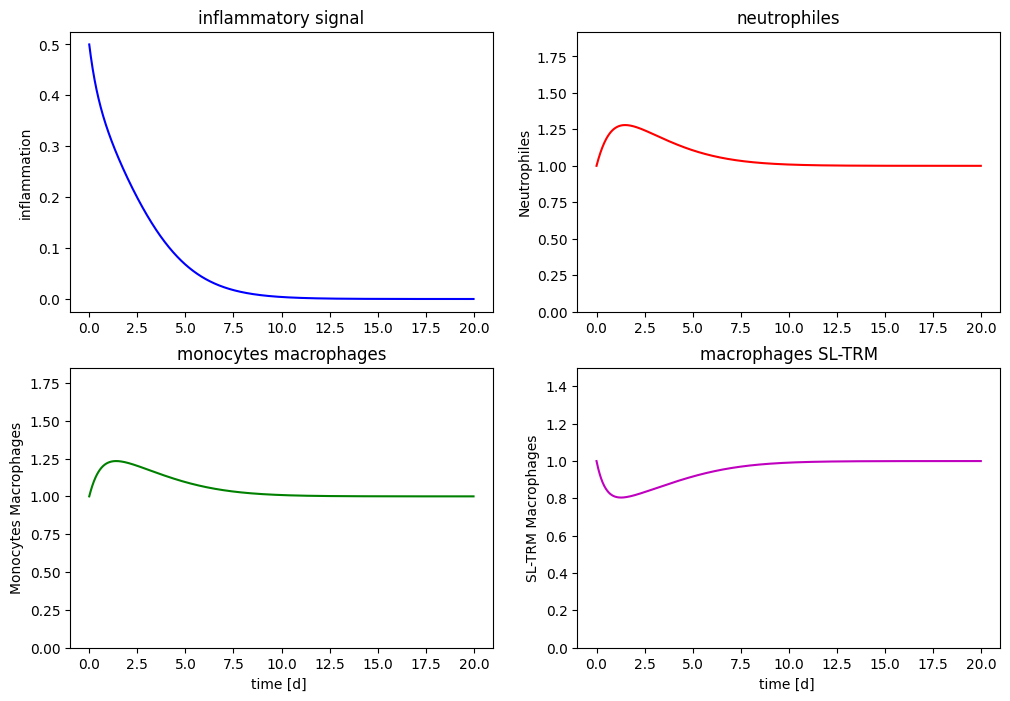}
    \caption{$A=0.38, S(0)=0.5$}\label{038_05}
\end{figure}
We see in Fig.~\ref{038_05} that even if $A=0.38$, the system converges to the disease-free equilibrium, indicating that the initial immune signal is not strong enough to trigger a persistent inflammatory state.
Keeping the same halved initial immune signal, $S(0)=0.5$, 
and the same $N(0)=1$ and $A=0.38$, but starting from an immunocompromised state, that is $M(0)=0.1,\, T(0)=0.05$, we now observe in Fig.~\ref{038_depressed} that the system diverges away from the disease-free equilibrium, indicating a persistent inflammatory state, even if the initial immune signal is low.
\begin{figure}[htbp]
    \centering
    \includegraphics[width=.7\textwidth]{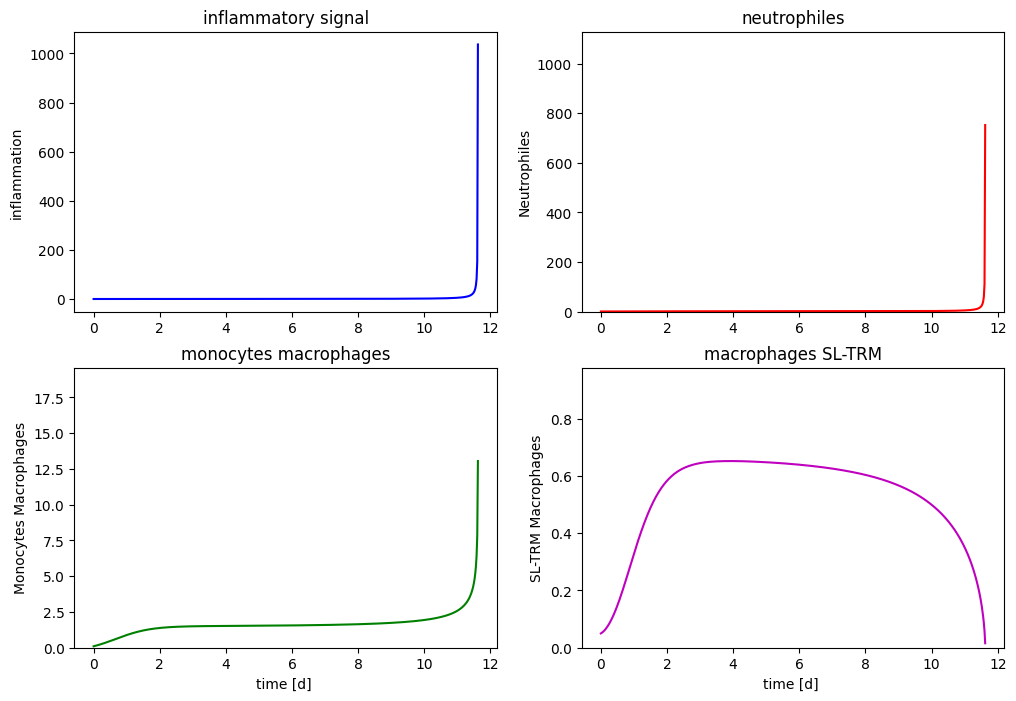}
    \caption{$A=0.38$, immunocompromised initial conditions}\label{038_depressed}
\end{figure}
\begin{figure}[htbp]
    \centering
    \includegraphics[width=.7\textwidth]{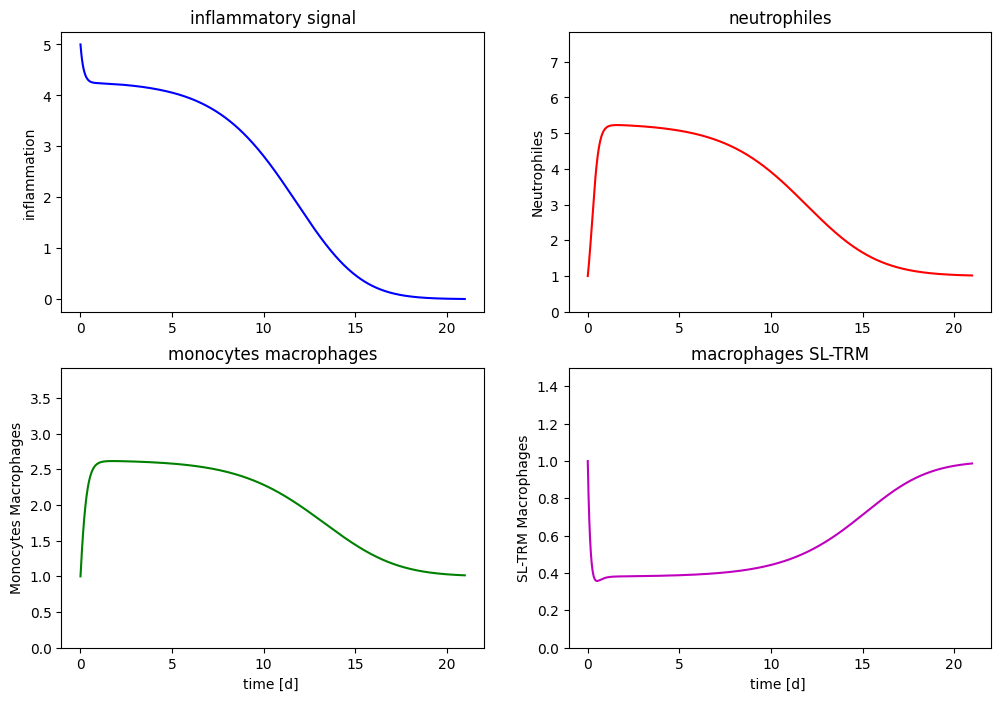}
    \caption{$A=0.072, S(0)=5$}\label{A=0,072}
\end{figure}
We can also start from a very high initial immune signal, taking $S(0)=5,\, N(0)=1, \,M(0)=1,\, T(0)=1$ and $A=0.072$. We observe in Fig.~\ref{A=0,072} that the system returns to the disease-free equilibrium, indicating that even if the initial immune signal is high, the system can still heal when $A=0.072 \ll 1$.
But if we slightly increase $A$ to $A=0.073$, keeping the other parameters unchanged, we observe in Fig.~\ref{A=0,073} that the system diverges away from the disease-free equilibrium, indicating a persistent inflammatory state.
\begin{figure}[htbp]
    \centering
    \includegraphics[width=.7\textwidth]{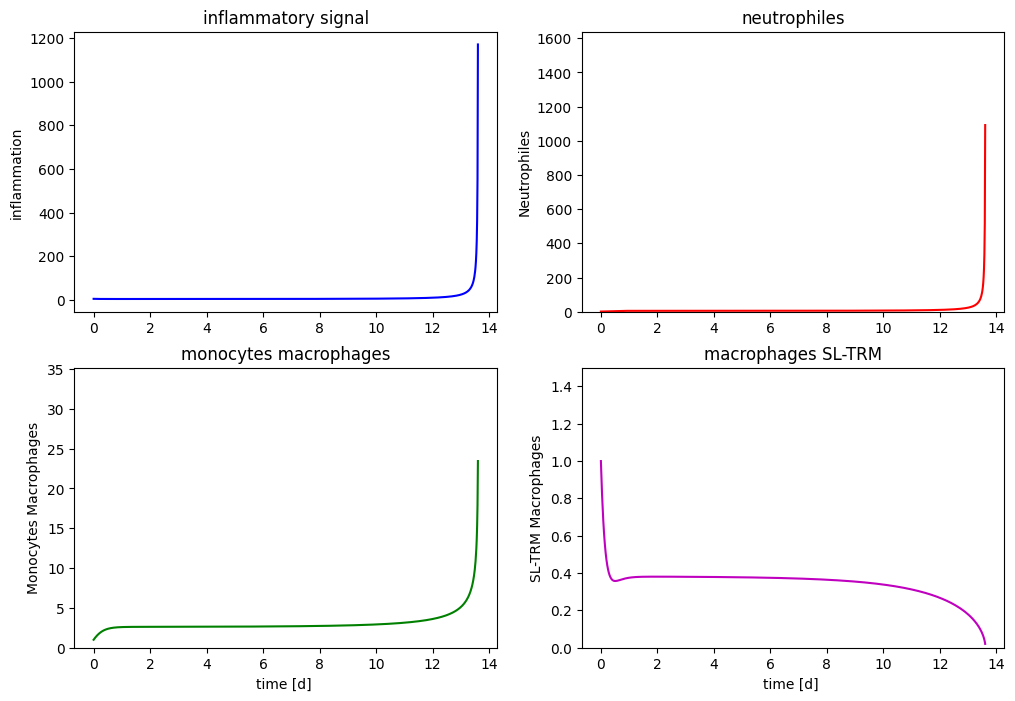}
    \caption{$A=0.073, S(0)=5$}\label{A=0,073}
\end{figure}
To see how the parameter $A=a/b$ controls the healing prognosis
we plot in Fig.~\ref{heal} a $(A,S)$ plane figure where for each value of $A$ and starting value $(S(0)=S,N=1,M=1,T=1)$
 we run the model for three weeks and see how far from the disease-free state the systems ends up.
 We color in green points where $(S,M,N,T)$ is close to $(0,1,1,1)$ up to a given tolerance of 5\%.
 We color in red points which end up far away from the equilibrium.
 \begin{figure}[!htbp]
    \centering
    \includegraphics[scale=0.5]{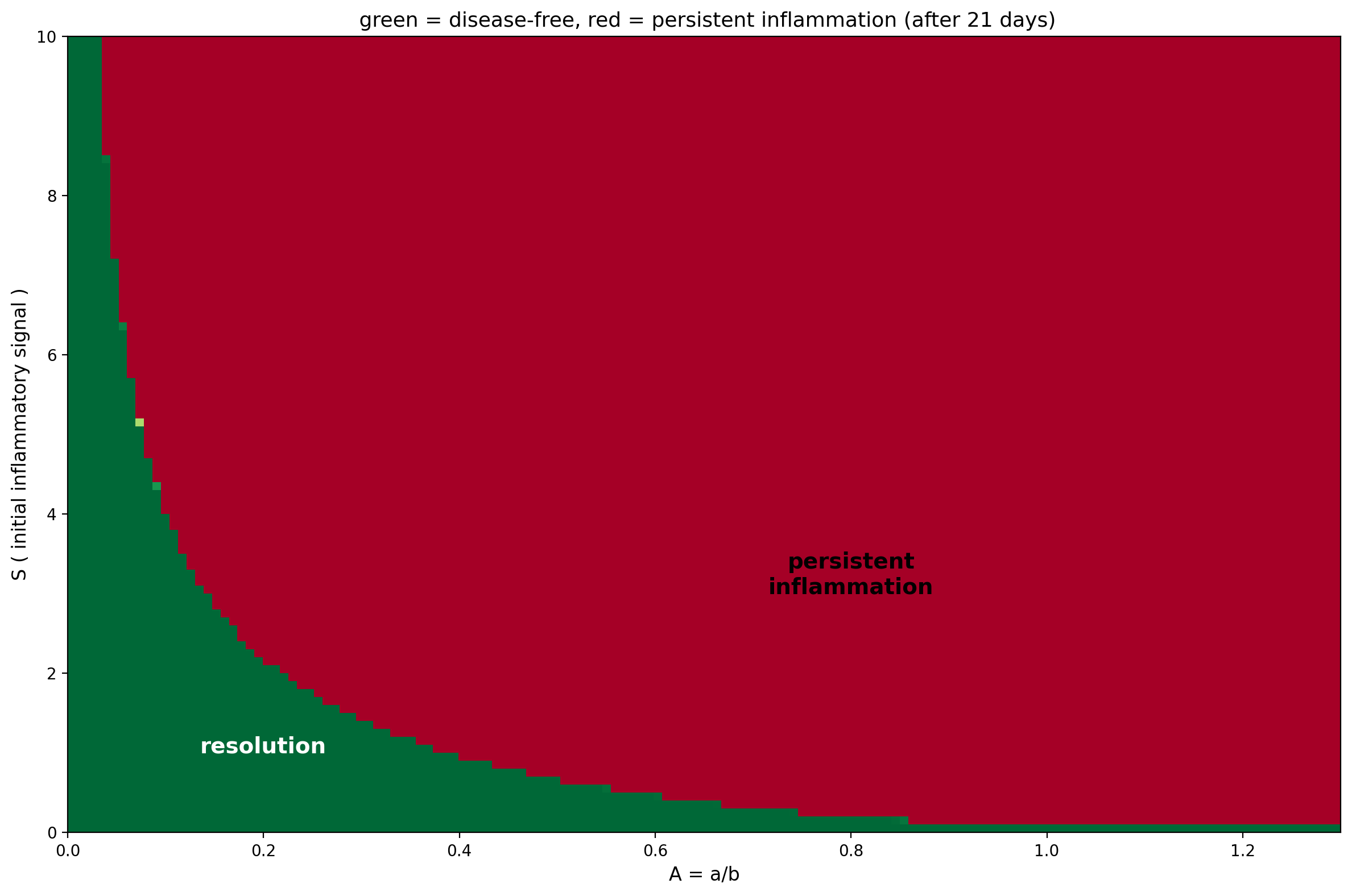}
    \caption{Healing w.r.t. $A=a/b$ and initial dose of immune signal $S$}\label{heal}
\end{figure}
We notice that for $A<1$, the system converges to the disease-free equilibrium provided that the   initial immune signal $S(0)$ is below a given threshold.
 This threshold increases as $A$ decreases, meaning that the system can tolerate higher initial immune signals and still come back  to the disease-free equilibrium when $A$ is small enough.
  This is consistent with the bifurcation diagram in Fig.~\ref{fig:bifurcation} where for $A<1$ the unstable inflammatory equilibrium moves away from the disease-free equilibrium when $A$ decreases.
 For $A\geq 1$, the system diverges away from the disease-free equilibrium for all non zero values of the initial immune signal $S(0)$.
  This suggests that for $A\geq 1$ the system is in a persistent inflammatory state, where the  inflammation is self-sustained and does not resolve, even if the initial immune signal is low.


The Python Jupyter notebook used to perform the simulations is available at \url{https://github.com/azerad/immunomath}.

\section{Biological insights and testable predictions.}\label{sec:insights_predictions}
Although deliberately minimal, 
the model suggests that the outcome of serum-transfer arthritis may depend on a balance between inflammatory amplification and resident macrophage-mediated regulation. 
In this framework, neutrophil-driven amplification promotes the persistence of the inflammatory signal, whereas SL-TRM exert a counter-regulatory effect. 
The model therefore predicts the existence of a threshold separating self-limited inflammation from sustained arthritis. 
This threshold may be shifted by the intensity of the initial arthritogenic stimulus, the efficiency of monocyte/macrophage recruitment, and the integrity of the resident macrophage compartment. 
These predictions could be tested experimentally by time-resolved quantification of neutrophils, monocyte-derived macrophages and resident synovial macrophages during STA, 
combined with perturbations that either reduce neutrophil recruitment or preserve/enhance resident macrophage regulatory functions.

\section{Concluding remarks and perspectives.}\label{sec:conclusion} In this work, we designed a simple model able to describe essential features of the monocyte-macrophages, neutrophiles and sublining tissue-resident macrophages dynamics
 submitted to an inflammatory stimulus.
We identified a single parameter controlling the positive or negative feedbacks processes at stake in arthritis. 
Essentially the ratio between the inflammatory effect of neutrophiles and the anti-inflammatory effect of SL-TRM macrophages appeared critical. 
Our study suggests therefore that drugs acting differently on neutrophiles and SL-TRM macrophages could be effective in curing arthritis.

\section*{Acknowledgments.}
We acknowledge the support of Immun4Cure University Hospital Institute ``Institute for innovative immunotherapies in autoimmune diseases'' [France 2030/ANR-23-IHUA-0009].

\end{document}